\newtheorem{theorem}{Theorem}[section]
\def\@biblabel#1{}
\theoremstyle{plain}
\newtheorem{cor}{Corollary}
\theoremstyle{definition}
\newtheorem{example}{Example}%[section]
\theoremstyle{remark}
\title{On the true detection probability of the \\
uniformly optimal search plan}
\author{Liang Hong\footnote{Liang Hong is a Professor in the Department of Mathematical Sciences,  The University of Texas at Dallas, 800 West Campbell Road, Richardson, TX 75080, USA. Tel.:~972-883-2161. Email address: liang.hong@utdallas.edu.}}
\date{May 29, 2025}
\begin{document}

\maketitle

\begin{abstract}
The gold standard for designing a search plan is to select a target distribution and then find the uniformly optimal search plan based on it.  This approach has been successfully applied in several high-profile civil and military search missions.  Since the target distribution is subjective and chosen at an analyst's discretion,  it is natural to ask whether this approach can generate a search plan that maximizes the true detection probability at each moment.  This article gives a negative answer by establishing that, under mild conditions,  for a given target distribution and the uniformly optimal search plan based on it, there is another target distribution whose induced uniformly optimal search plan leads to an increased true detection probability at every moment.  In particular, it implies that the problem of finding a search plan that maximizes the true detection probability at each moment remains unsolved. 

\smallskip

\emph{Keywords and phrases:} Bayesian updating; maritime operations; search and detection;  stationary target.
\end{abstract}

\section{Introduction}
The optimal search theory studies the best way of searching for a target given a limited budget when the target location is uncertain.  It originated in the US Navy when its Anti-Submarine Warfare Operations Research Group needed to find a way to efficiently detect hostile submarines during World War II (e.g., Koopman 1945, 1956a, b, c).   Later progress of the theory was largely driven by real-world maritime search missions too; see, for instance, Stone and Stanshine (1971), Stone (1973, 1975, 1976), Richardson and Stone (1971),   Richardson et al. (1980),  Stone et al. (2014a),  Vermeulen and Brink (2017), and Bourque (2019).  Most recently, the game-theoretic approach to the search problem has received considerable attention; see, for instance,  Alpern and Gal (2003),   Clarkson et al. (2020),  Lidbetter (2020),  Alpern et al. (2021),  and Lin (2021).  Though we have witnessed incredible advancement in naval technology since World War II,  the optimal search theory remains relevant and important,  as evidenced by recent notable naval accidents, such as the loss of Indonesia's KRI Nanggala submarine in April 2021,  the collision of the USS Connecticut with a seamount in the South China Sea in October 2021,  and the crash of an F-35C of the US Navy in the South China Sea January 2022.  Moreover,  the optimal search theory can also be applied to space exploration endeavors, such as NASA's Artemis campaign and Mars Missions. 

The gold standard for designing a search plan is based on the Bayesian approach: to choose a target distribution and then obtain the uniformly optimal search plan based on it.  This approach has been successfully employed in several high-profile civil and military search missions; see Richardson and Stone (1971), Richardson et al. (1980), Stone (1992), and Stone et al. (2014a), and references therein.  This approach also forms the theoretical underpinnings of the U.S. Coast Guard's Search and Rescue Optimal Planning System (SAROPS) and its predecessor Computer Assisted Search Planning (CASP); see, for example,  Stone (1975) and Kratzke et al. (2010).  The uniformly optimal search plan plays a key role in this approach. Arkin (1964) first established sufficient conditions for a uniformly optimal search plan to exist in the Euclidean search space.   Stone (1973, 1975, 1976) extended and generalized his work.  Properties of the uniformly optimal search plan have been widely studied (e.g., Stone 1975, Stone et al. 2016, Hong 2024).  By its definition, the uniformly optimal search plan maximizes the detection probability at each moment.  Since the target distribution is subjective (hence there is no true target distribution), the detection probability a uniformly optimal search plan maximizes is subjective. It is not the true/objective detection probability which only depends on the true target location,  the (objective) detection function, and the amount of effort allocated to the true target location.  Given this observation, it is natural to ask an important question:
\begin{enumerate}
\item[]\emph{Can an analyst find a target distribution whose induced uniformly optimal search plan maximizes the true detection probability at each moment?}
\end{enumerate}

\noindent The key contribution of this article is to give a negative answer. Specifically,   it shows that, under certain conditions,  for any chosen target distribution and the optimal search plan based on it,  there is a different target distribution whose induced uniformly optimal search plan results in a larger true detection probability at every moment.

This result has several interesting consequences. First,  this result demonstrates that the aforementioned gold standard will not lead to a genuinely optimal solution to the search problem.  Moreover,  this result serves as a caveat to practitioners: a uniformly optimal search plan maximizes the subjective detection probability at every moment, but it does not necessarily maximize the true detection probability at every moment.  Finally,  this result shows that the search problem is profoundly deep and challenging.  The problem of finding a search plan that maximizes the true detection probability at each moment remains unsolved.  

The remainder of the article is organized as follows.  In Section~2, we establish our notational conventions by reviewing the search problem and the uniformly optimal search plan.  Next, in Section~3, we provide two motivating examples to build intuition.  Then, we establish our main results in Section~4; there we treat the discrete case and the continuous case separately.  Finally,  in Section 5,  we conclude the article with a discussion on the societal and practical relevance of our main finding.

\section{Notation and setup}

We are interested in searching for a stationary target where the exact target location $x$ is unknown,  the equipment used for detection, such as a sonar or radar, is not perfect, and the amount of effort available is limited.  Though the exact target location is unknown,  we often have some information about it. To avail ourselves of this information and, at the same time, quantify the uncertainty of the target location, we first pin down an area, in our assessment,  that definitely contains the target.  This area is called the \emph{possibility area} and denoted as $X$.  Then we construct a \emph{target distribution} for $x$ that is supported on $X$.  We will use $\Pi$ to denote the (cumulative) distribution function of the proposed target distribution.  That is, the possibility area $X$ is the support of $\Pi$.  $X$ can be either countable (i.e., finite or countably infinite) or uncountable.  If $X$ is countable,  the smallest region over which search effort can be allocated is represented by a cell; we refer to this case as the \emph{discrete case}.  Without loss of generality, we may number these cells using positive integers.  Also, $X$ is always finite in any real-world application.  Therefore,  we always take $X$ to be a finite subset of $\{1, 2, \ldots\}$ in the discrete case.  If $X$ is uncountable, then it is assumed that the search effort can be allocated in a continuous manner; this case will be referred to as the \emph{continuous case}.  In the continuous case,  we assume $X$ is a subset of the $n$-dimensional Euclidean space $\mathbb{R}^n$.  Throughout, we assume  $\Pi$ admits a  density function $\pi$.  That is, $\pi$ denotes the probability mass function (i.e.,  the discrete density function) in the discrete case and the probability density function in the continuous case.  We have implicitly taken the Bayesian approach because in most realistic cases, such as a marine accident,  the scenario will never repeat itself under identical conditions (i.e., the sample size of the available data is $1$), rendering a frequentist approach inappropriate.  Note that the target distribution is subjective and there is no true target distribution.  Since the true target location is uncertain in the search problem,  the target distribution is always assumed to be non-degenerate. 

Once the possibility area $X$ is nailed down, we must decide how to allocate the effort in $X$.  Let $\mathbb{R}_+$ denote the non-negative real line $[0, \infty)$.   In the discrete case,  we define an \emph{allocation} on $X$ to be a function $f: X\rightarrow \mathbb{R}_+$; for the continuous case,  we define an \emph{allocation} on $X$ to be a function $f: X\rightarrow \mathbb{R}_+$ such that $\int_Af(x)dx$ equals the amount of search effort allocated in $A$ where $A$ is any subset of $X$. Therefore, the total effort invested in the discrete case and the continuous case are $\sum_{x\in X}f(x)$ and $\int_Xf(x)dx$ respectively. 

In the search problem, there are two sources of uncertainty: (i) uncertainty of the target location, (ii) uncertainty of detection due to imperfection of the sensor.  The former is quantified by a target distribution.  To account for the latter, we will use a \emph{detection function} $d: X\times \mathbb{R}_+\rightarrow [0, 1]$. In the discrete case, $d(x, y)$ is the conditional probability of detecting the target when $y$ amount of effort is placed in cell $x$ given that the target is in cell $x$.  For the continuous case,   $d(x, y)$ denotes the conditional probability of detecting the target if the effort density equals $y$ at $x$ given that the target is at $x$.  Different from the target distribution, the detection function is objective and depends on the sensor.  Throughout, we assume the detection function has been either correctly derived from physical laws or reliably estimated from repeated testing.  
%In practice, it can be reliably estimated from repeated testing.  

Given a target distribution $\Pi$,  the allocation $f$, and the detection function $d$,  the \emph{subjective probability of detection},  denoted as $P[f]$, equals
\[
P[f]=\left\{
		                           \begin{array}{ll}
		                           \sum_{x\in X}d(x, f(x))\pi(x),& \hbox{in the discrete case,} \\
					\int_Xd(x, f(x))\pi(x)dx, & \hbox{in the continuous case.} 
		                          \end{array}
		                         \right.
\]
Since $\pi$ is subjective,  $P[f]$ is not the true probability of detection.  Let $x_0$ denote the true cell that contains the target  in the discrete case and the true target location in the continuous case. Then
the \emph{true/objective probability of detection},  denoted as $P^{\#} [f]$, equals
\begin{equation}
\label{eq:trueprob}
P^{\#}[f]=d(x_0, f(x_0)).
\end{equation}

Every search has a budget.  To account for this, we first define a cost function $c: X \times \mathbb{R}_+ \rightarrow \mathbb{R}_+$.  In the discrete case,  $c(x,  y)$ denotes the cost of applying $y$ effort in cell $x$; in the continuous case, $c(x, y)$ stands for the cost density of applying effort density $y$ at location $x$. For an allocation $f$ on $X$, the cost resulting from $f$, denoted as $C[f]$, is given by
\[
C[f]=\left\{
		                           \begin{array}{ll}
		                           \sum_{x\in X} c(x, f(x)),& \hbox{in the discrete case,} \\
					\int_Xc(x, f(x))dx, & \hbox{in the continuous case.} 
		                          \end{array}
		                         \right.
\]
Unless otherwise stated,  we assume the cost is proportional to allocation, i.e.,  $c(x, y)=y$ for all $x\in X$. 

A \emph{search plan} on $X$ is a function $\varphi: X\times \mathbb{R}_+\rightarrow \mathbb{R}_+$ such that
\begin{enumerate}
\item[(i)]$\varphi(\cdot, t)$ is an allocation on $X$ for all $t\geq 0$;
\item[(ii)]$\varphi(x, \cdot)$ is an increasing function for all $x\in X$.
\end{enumerate}
Condition~(i)  ensures that $\varphi(\cdot, t)$ is an allocation in the first $t$ units of time.  Condition~(ii) says that no effort can be undone as the search progresses.  Let $T$ be the time to find the target using $\varphi$.  We will use $\mu(\varphi)$ to denote the expectation of $T$ with respect to the subjective probability of detection, i.e., the subjective mean time to find the target. It is well-known that (e.g., Lemma~3.4 of Kallenberg 2002)
\begin{equation*}
\mu(\varphi)=\int_0^\infty  (1-P[\varphi(\cdot, t)])dt.
\end{equation*}
Similarly,  we will use $\mu^{\#}$ to denote the expectation of $T$ with respect to the true probability of detection, i.e., the true mean time to find the target. We have
\begin{equation}
\label{eq:truemean}
\mu^{\#} (\varphi)=\int_0^\infty(1-P^{\#}[\varphi(\cdot, t)])dt.
\end{equation}

We define the \emph{cumulative effort function} $E$ to be a non-negative function with domain $\mathbb{R}_+$ such that $E(t)$ is the effort available by time $t$.  We assume $E$ is increasing and $E(t)>0$ for all $t>0$.  There are many different search plans available to a search team.  We need a criterion for comparing two distinct search plans. Ideally,   it would be the true probability of detection.  Since the true target location is unknown, the true probability of detection cannot be calculated in actuality.  Hence,  the best we can do is to use the subjective probability of detection.  Given a target distribution $\Pi$,  the \emph{uniformly optimal search plan for $\Pi$ and $E(t)$}  maximizes the subjective probability of detection at each time $t\geq 0$, subject to the constrain $C[\varphi^\star(\cdot, t)]\leq E(t)$.  To make a precise definition, let $\Phi(E)$ be the class of search plans $\varphi$ such that 
\begin{equation}
\label{eq:uniformdis1}
\sum_{x\in X}\varphi(x, t)=E(t), \quad \text{for all $t\geq0$},
\end{equation}
for the discrete case,  and 
\begin{equation}
\label{eq:uniformcon1}
\int_X\varphi(x, t)dx=E(t), \quad \text{for all $t\geq0$},
\end{equation}
for the continuous case. A search plan $\varphi^\star\in \Phi(E)$ is said to be \emph{uniformly optimal for $\Pi$ within $\Phi(E)$} if 
\begin{equation}
\label{eq:uniform}
P[\varphi^\star(\cdot, t)]=\max\{P[\varphi(\cdot, t)\mid \varphi\in \Phi(E)]\}\quad \text{for all $t\geq 0$},
\end{equation}
where $P[\varphi^\star]$ is the subjective probability of detection based on $\Pi$.   A uniformly optimal search plan has several desirable properties; see Chapter~3 of Stone (1975) and Hong (2024).  However, a uniformly optimal search plan does not always exist, as Example~2.2.9 of Stone (1975) shows.   But we have the following sufficient conditions for the existence of uniformly optimal search plans; see, for instance, Section~2.4 of Stone (1975).  

\begin{theorem}
\label{thm:unifexistence}
\
\begin{enumerate}
\item[(i)]If $\Pi$ is a target distribution on a discrete possibility area $X$, $d(x, 0)=0$,  and $d(x, \cdot)$ is continuous,  concave, and increasing for each $x\in X$,  there exists a uniformly optimal search plan for $\Pi$ within $\Phi(E)$.   
\item[(ii)]If $\Pi$ is a target distribution on a continuous possibility area $X$,  $d(x, 0)=0$, and $d(x, \cdot)$ is increasing and right-continuous for each $x\in X$,  there exists a uniformly optimal search plan for $\Pi$  within $\Phi(E)$.   
\end{enumerate}
\end{theorem}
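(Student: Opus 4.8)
The plan is to prove each part by explicitly constructing a uniformly optimal search plan and verifying it satisfies the optimality condition \eqref{eq:uniform}. The natural tool is a Lagrangian/variational argument at each fixed time $t$: find the allocation that maximizes $P[\varphi(\cdot,t)]$ subject to the total-effort constraint \eqref{eq:uniformdis1} or \eqref{eq:uniformcon1}, and then check that these pointwise-in-$t$ maximizers can be stitched together into a genuine search plan (i.e., that one can choose them to be increasing in $t$ for each $x$, as required by Condition~(ii) of the definition of a search plan).

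For part (i), the discrete case, I would first solve the static problem: maximize $\sum_{x\in X}d(x,f(x))\pi(x)$ over allocations $f$ with $\sum_{x\in X}f(x)=E$ for a fixed budget $E$. Because each $d(x,\cdot)$ is concave and increasing with $d(x,0)=0$, this is a concave program over a compact simplex-type set (the set of allocations summing to $E$ is closed and bounded in $\mathbb{R}_+^{|X|}$ since $X$ is finite), so a maximizer exists. The KKT conditions give a "water-filling" characterization: there is a multiplier $\lambda=\lambda(E)$ such that at the optimum, $\pi(x)d'(x,f^\star(x))=\lambda$ wherever $f^\star(x)>0$ and $\pi(x)d'(x,0)\le\lambda$ wherever $f^\star(x)=0$ (using left/right derivatives or subgradients to handle non-differentiability). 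The key structural fact to extract is \emph{monotonicity in the budget}: as $E$ increases, $\lambda(E)$ decreases and each coordinate $f^\star_E(x)$ is non-decreasing in $E$. Setting $\varphi^\star(x,t):=f^\star_{E(t)}(x)$ then automatically satisfies Condition~(ii) of a search plan because $E$ is increasing, and \eqref{eq:uniform} holds by construction. A measurable-selection remark ensures $\varphi^\star(\cdot,t)$ is a legitimate allocation for each $t$.

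For part (ii), the continuous case, the weaker hypotheses (merely increasing and right-continuous, not concave) mean we cannot rely on a concave program. Here I would invoke the classical approach via the "incremental" or marginal-return function: for each $x$, consider the generalized inverse of $y\mapsto d(x,y)$ or, following Stone (1975), work with the function measuring, for a given marginal density level $\lambda$, how much effort $y(x,\lambda)$ should be placed at $x$. One defines a family of allocations indexed by a threshold parameter, integrates to match the total-effort budget $E(t)$ (right-continuity of $d(x,\cdot)$ guarantees the relevant inverse/hitting-time functions are well-behaved and the integral $\int_X$ is a right-continuous, increasing, exhaustive function of the threshold, so the budget can always be matched), and shows this allocation maximizes $\int_X d(x,f(x))\pi(x)\,dx$ pointwise in $t$. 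Monotonicity of the threshold in $E(t)$ again yields Condition~(ii). The measure-theoretic bookkeeping — checking joint measurability of $(x,t)\mapsto\varphi^\star(x,t)$ and that the constructed object is integrable over $X$ — needs the standing assumption that $X\subset\mathbb{R}^n$ and $\pi$ is a density.

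\textbf{The main obstacle} I expect is not the static optimization at a single $t$ — that is standard Lagrangian/water-filling theory — but rather the \emph{simultaneous} optimality across all $t\ge0$ together with the monotonicity-in-$t$ requirement. One must show that a single search plan can be optimal at every time, which hinges on the nested structure of the optimal allocations as the budget grows: the support and coordinatewise values of $f^\star_{E(t)}$ must increase with $E(t)$. In the discrete concave case this follows cleanly from the KKT multiplier being monotone; in the continuous non-concave case it requires the more delicate threshold/incremental construction of Stone, and the right-continuity hypothesis is exactly what makes the threshold-to-budget map surjective so that no "gaps" in $E$ are unattainable. Since the paper cites Section~2.4 of Stone (1975) for both parts, I would structure the proof as (a) recall the static optimizer and its multiplier characterization, (b) prove budget-monotonicity of the optimizer, (c) define $\varphi^\star(x,t)=f^\star_{E(t)}(x)$ and verify it is a search plan, and (d) conclude \eqref{eq:uniform} holds by the pointwise optimality in (a).
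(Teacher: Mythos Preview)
The paper does not actually prove this theorem: it is stated as background and attributed to Section~2.4 of Stone (1975), with no argument given in the text. So there is no ``paper's own proof'' to compare against beyond that citation.

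Your outline is broadly faithful to the classical argument in Stone. For part~(i) the concave-program-plus-KKT route, followed by the budget-monotonicity of the optimizer and the definition $\varphi^\star(x,t)=f^\star_{E(t)}(x)$, is exactly the standard construction, and your identification of the nested/monotone structure of optimal allocations as the crux is correct. For part~(ii) your sketch is vaguer, and appropriately so: without concavity the KKT/water-filling picture does not directly apply, and Stone's proof in the continuous case proceeds instead through a level-set (bathtub/rearrangement) construction in which right-continuity of $d(x,\cdot)$ ensures the budget map is onto. Your description of ``threshold parameter, integrate to match the budget, monotonicity of the threshold'' captures the shape of that argument, though a full proof would need to make precise why a pointwise maximizer exists at all in the absence of concavity---this is where the continuum of $X$ (no single point carries positive mass) is doing real work, a point you allude to only implicitly. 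If you were to write this out in full you would want to state and use that ingredient explicitly.
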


A class of detection functions,  called \emph{regular functions}, satisfy these sufficient conditions.  A detection function $d$ is said to be \emph{regular} if $d(x, 0)=0$ and $\partial d(x, y)/\partial y$ is continuous, positive, and strictly decreasing for all $x\in X$.  A detection function $d$ is said to be \emph{homogeneous} if $d(x_1, y)=d(x_2,y)$ for all $x_1, x_2\in X$ and $y\geq 0$.  One of the most frequently used detection functions is the \emph{exponential function}:
\[d(x, y)=1-e^{-\alpha_xy}, \quad  y\geq 0,\]
where  $\alpha_x>0$  is called the \emph{rate} of $d$ at $x\in X$. A \emph{homogeneous and exponential} detection function  with rate $\alpha>0$ takes the form
 \[d(x, y)=1-e^{-\alpha y}, \quad y\geq 0.\]
When the detection function is regular,  a semi-closed form of the uniformly optimal search plan can be derived under mild conditions; see Chapter~2 of Stone (1975) or Chapter~5 of Washburn (2014).  In particular, we have the following theorem. 

\begin{theorem}[Stone 1975]
\label{thm:unifopt}
If the cost function takes the form $c(x,y)=y$ for all $y\geq 0$ and $x\in X$ and the detection function is regular,  a uniformly optimal search plan $\varphi^\star$ within $\Phi(E)$ can be found for any target distribution $\Pi$ as follows.  Define
\begin{equation}
\label{eq:rate}
q_x (y)=\pi(x)\frac{\partial }{\partial y}d(x, y), \quad x\in X \text{ and $y\geq 0$},
\end{equation}

\begin{equation}
q^{-1}_x(\lambda)=\left\{
		                           \begin{array}{ll}
		                           \text{the inverse function of $q_x(y)$  evaluated at $\lambda$},& \hbox{if $0<\lambda\leq q_x(0)$,} \\
					0, & \hbox{if $\lambda>q_x(0)$,} 
		                          \end{array}
		                         \right.
\end{equation}		                       
and 
\begin{equation}
Q(\lambda)= \left\{
		                           \begin{array}{ll}
		                          \sum_{x\in X} q_x^{-1}(\lambda),  & \hbox{in the discrete case,} \\
					 \int_X q_x^{-1}(\lambda)dx,  & \hbox{in the continuous case.} 
		                          \end{array}
		                         \right.
\end{equation}
Then a uniformly optimal search plan for $\Pi$ within  $\Phi(E)$ is given by $\varphi^\star(x, t)=q_x^{-1}(Q^{-1}(E(t)))$ where $Q^{-1}$ is the inverse function of $Q$. 
\end{theorem}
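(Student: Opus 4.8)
The plan is to verify directly that the plan $\varphi^\star$ constructed in the statement belongs to $\Phi(E)$ and satisfies \eqref{eq:uniform}, by reducing the uniform-in-$t$ optimality to a one-parameter family of static concave maximizations and solving each one by a Lagrange / Karush--Kuhn--Tucker (KKT) argument. First I would isolate the static problem: for a fixed effort level $z\geq 0$, maximize $P[f]$ over all allocations $f$ on $X$ with $f\geq 0$ and total effort $z$ (equivalently $C[f]=z$, since $c(x,y)=y$). Because the detection function is regular, $d(x,0)=0$ and $\partial d(x,y)/\partial y$ is positive and strictly decreasing, so each $d(x,\cdot)$ is strictly increasing and strictly concave; hence $f\mapsto P[f]$ is concave on the convex feasible set, and the KKT conditions are both necessary and sufficient. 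Attaching a multiplier $\lambda$ to the budget constraint, stationarity reads $\pi(x)\,\partial d(x,y)/\partial y=\lambda$ at every cell/point with $f(x)>0$ and $\pi(x)\,\partial d(x,0)/\partial y\leq\lambda$ otherwise, which, in the notation of \eqref{eq:rate}, is exactly $f(x)=q_x^{-1}(\lambda)$. In the continuous case the same first-order condition is obtained by maximizing the integrand $d(x,y)\pi(x)-\lambda y$ over $y$ for (almost) every $x$, once $x\mapsto q_x^{-1}(\lambda)$ is seen to be measurable.

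Next I would pin down $\lambda$ from the budget. Since $q_x$ is continuous and strictly decreasing in $y$, $q_x^{-1}$ is continuous and decreasing in $\lambda$ on the range where it is positive, so $Q$ is continuous and decreasing and, on that range, strictly decreasing, hence invertible with $Q^{-1}$ decreasing. Setting $\lambda=Q^{-1}(z)$ makes the total effort of $f_z(x):=q_x^{-1}(Q^{-1}(z))$ equal to $z$, so $f_z$ is feasible, and by KKT sufficiency it is optimal for the static problem at level $z$. Substituting $z=E(t)$ gives $\varphi^\star(x,t)=q_x^{-1}(Q^{-1}(E(t)))$. To see that this is a search plan: $\varphi^\star(\cdot,t)\geq 0$ and has total effort $E(t)$, so it lies in $\Phi(E)$, giving condition~(i) together with \eqref{eq:uniformdis1}/\eqref{eq:uniformcon1}; and since $Q^{-1}$ and each $q_x^{-1}$ are decreasing while $E$ is increasing, $\varphi^\star(x,\cdot)$ is increasing, giving condition~(ii). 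Finally, for each $t$ the allocation $\varphi^\star(\cdot,t)$ solves the static problem at level $E(t)$, which is precisely the maximization in \eqref{eq:uniform}; therefore $\varphi^\star$ is uniformly optimal for $\Pi$ within $\Phi(E)$. (One may alternatively invoke Theorem~\ref{thm:unifexistence} for existence and then use the necessity of KKT to show any uniformly optimal plan must have this form, but the direct verification above is cleaner.)

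The main obstacle I anticipate is the regularity bookkeeping that makes the Lagrangian heuristic rigorous. In the discrete case one must check that $Q$ actually covers the effort levels demanded by $E$, i.e.\ that $Q^{-1}(E(t))$ is defined for every $t$; this is where a mild growth hypothesis on $d$ (implicit in ``under mild conditions'') enters, since if $\partial d(x,y)/\partial y$ has a strictly positive limit as $y\to\infty$ the supremum of $Q$ is finite and large budgets cannot be matched. In the continuous case one must, in addition, establish joint measurability of $(x,y)\mapsto d(x,y)$, measurability and integrability of $x\mapsto q_x^{-1}(\lambda)$, and continuity of $Q$ (via dominated convergence), after which the pointwise KKT argument applies verbatim and the same monotonicity reasoning assembles the static optima into a bona fide search plan rather than merely a family of allocations. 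That monotonicity-in-$z$ step is the conceptual crux, but it follows immediately once $Q^{-1}$ and the $q_x^{-1}$ are known to be monotone.
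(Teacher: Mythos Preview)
The paper does not actually prove Theorem~\ref{thm:unifopt}; it is stated as a cited result from Stone (1975) (and Washburn 2014), followed only by an informal paragraph explaining the intuition in terms of ``rate of return'' $q_x(y)$, the inverse $q_x^{-1}$, and the total-allocation function $Q$. There is therefore no formal argument in the paper to compare against.

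That said, your proposal is the standard and correct route, and it matches the paper's informal narrative precisely: the ``rate of return'' $q_x(y)=\pi(x)\,\partial d(x,y)/\partial y$ that the paper describes is exactly the marginal Lagrangian in your KKT condition, the equalization $q_x(\varphi^\star(x,t))=\lambda$ on the active set is the paper's ``gradually decreasing the rate of return $\lambda$ that is allowed into the allocation,'' and choosing $\lambda=Q^{-1}(E(t))$ is the paper's ``stop when the rate of return falls to the threshold that corresponds to total allocation reaching the budget.'' Your identification of the two technical caveats---that $Q$ may fail to be onto $[0,\infty)$ unless $\partial d(x,y)/\partial y\to 0$, and that the continuous case needs measurability/integrability of $x\mapsto q_x^{-1}(\lambda)$ plus continuity of $Q$---is accurate and is exactly the sort of regularity Stone (1975, Chapter~2) handles; the paper here simply takes those details for granted.
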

Here $q_x(y)$ accounts for both the uncertainty of the target location and the future change in detection.  Intuitively,  it represents the ``rate of return'' for allocating more effort to the location $x$ given that $y$ amount of effort has already been allocated there.  Thus,  $q^{-1}_x(\lambda)$ stands for the amount of allocation already required at $x$ so the rate of return for placing more allocation equals $\lambda$, or $0$ if the rate of return is less than $\lambda$ before any allocation at $x$. The idea behind Theorem~\ref{thm:unifopt} is as follows: we distribute search effort by gradually decreasing the rate of return $\lambda$ that is allowed into the allocation; hence,  effort distributed is proportional to the areas with the highest rate of return at the current moment.  We stop when the rate of return $\lambda$ falls to the threshold that corresponds to the event `` total allocation has reached the budget $E$''.  This threshold is quantified by the function $Q$, where $Q(\lambda)$ is the total allocation if every location $x$ in the search space is allocated effort until the rate of return $q_x(y)$ for allocating more effort there falls to $\lambda$. 

\section{Two motivating examples}
Here we first look at two concrete examples,  one discrete and one continuous, to build the intuition for the key results in the next section.

\begin{example}
\label{ex:discrete1}
This example slightly generalizes the example on Page 3 of Stone (1975). Suppose the possibility area $X=\{1, 2\}$, the true target location is cell $1$,  and the search budget at time $t$ is $E(t)$.   We take the target distribution to be $\pi(1)=p$ and $\pi(2)=1-p$, where $1/2<p<1$.  The detection function $d$  is homogeneous and exponential with rate $\alpha=1$, i.e.,  $d(x, y)=1-e^{-y}, \ y\geq 0$ for all $x\in X$.  Recall our convention in Section~2: the cost function always takes the form $c(x, y)=y$ for all $x\in X$.  

Also, the exponential function is a regular detection function.  (Indeed, $d(x, 0)=1-e^{-0}=1$ and $\partial d(x, y)/ \partial y=ye^{-y}$, which is continuous, positive, and strictly increasing in $y$ for all $x\in X$.) Therefore,  the hypothesis of Theorem~\ref{thm:unifopt} is satisfied.  It follows that the uniformly optimal search plan $\varphi^\star$ exists.   By the algorithm in Theorem~\ref{thm:unifopt} or the same argument as on Pages 4--6 of Stone (1975),  we know $\varphi^\star$ is given by 
\[\varphi^\star (1, t)=\left\{
		                           \begin{array}{ll}
		                            E(t),  & \hbox{if $0<E(t)\leq  \ln \left(\frac{p}{1-p}\right) $,} \\
					 \frac{1}{2}\left[E(t)+\ln \left(\frac{p}{1-p}\right) \right],  & \hbox{if $E(t)>\ln \left(\frac{p}{1-p}\right) $,} 
		                          \end{array}
		                         \right.
		                         \]
and 
\[\varphi^\star (2, t)=\left\{
		                           \begin{array}{ll}
		                          0,  & \hbox{if $0< E(t) \leq \ln \left(\frac{p}{1-p}\right) $,} \\
					 \frac{1}{2}\left[E(t)-\ln \left(\frac{p}{1-p}\right) \right],  & \hbox{if $E(t)>\ln \left(\frac{p}{1-p}\right)$.} 
		                          \end{array}
		                         \right.
		                         \]
Suppose $E(t)>\ln [p/(1-p)]$ for all $t>0$ in a real-world search mission. Then the subjective probability of detection equals
\begin{eqnarray*}
P[\varphi^\star(\cdot, t)] &=& \pi(1) d(1,  \varphi^\star (1, E(t)))+\pi(2)d(2, \varphi^\star (2, E(t)))\\
			&=& p\left(1-e^{- \frac{1}{2}\left[E(t)+\ln \left(\frac{p}{1-p}\right) \right]}\right)+(1-p)\left(1-e^{- \frac{1}{2}\left[E(t)-\ln \left(\frac{p}{1-p}\right) \right]}\right) \\
			&=& 1-2\sqrt{p(1-p)}e^{-\frac{1}{2}E(t)}.
\end{eqnarray*}

Since the true target location is cell $1$,  the true probability of detection, as a function of $p$,  is 
\begin{eqnarray*}
P^{\#}[\varphi^\star(\cdot, t)] (p)&=&  d(1,  \varphi^\star (1, E(t)))\\
				&=& 1-e^{- \frac{1}{2}\left[E(t)+\ln \left(\frac{p}{1-p}\right) \right]}\\
				&=& 1-e^{-E(t)/2}\sqrt{\frac{1-p}{p}}\\
				&=& 1-e^{-E(t)/2}\sqrt{\frac{1}{p}-1},				
\end{eqnarray*}
which is increasing in $p$ on $(0, 1)$. Also, it follows from (\ref{eq:truemean}) that the true mean time to find the target $\mu^{\#}(\varphi)(p)$, as a function of $p$, is decreasing in $p$ on $(0, 1)$.  Therefore,  regardless of the value of $p$, there is a $\widetilde{p}\in (0, 1)$ such that $\widetilde{p}>p$,  $P^{\#} [\varphi^\star(\cdot, t)](\widetilde{p})>P^{\#}[\varphi^\star(\cdot, t)](p)$ for all $t>0$ and $\mu^{\#} (\varphi^\star)(\widetilde{p})<\mu^{\#} (\varphi^\star)(p)$.

\end{example}
\noindent \textbf{Remark.} By symmetry,   if the true target location is cell $2$, then there is a $\widetilde{p}\in (0, 1)$ such that $0<\widetilde{p}<p$ and  $P^{\#} [\varphi^\star(\cdot, t)](\widetilde{p})>P^{\#}[\varphi^\star(\cdot, t)](p)$ for all $t>0$ and $\mu^{\#}(\varphi^\star)(\widetilde{p})<\mu^{\#} (\varphi^\star)(p)$.  Therefore, we will have the same conclusion.

\begin{example}
\label{ex:continuous1}
This example slightly generalizes Examples~2.2.1 and 2.2.7 of Stone (1975).
Suppose the possibility area is $X=\mathbb{R}^2$ and the true target location is $x_0$.  Here the search is assumed to be conducted at speed $v$ using a sensor with a sweep width $W$. Thus, if there is time $t$ available for search, then the search budget will be $E(t)=Wvt$. 
The target distribution is circular normal, i.e., 
 \begin{equation}
\pi(x_1, x_2)=\frac{1}{2\pi\sigma^2}e^{-\frac{x_1^2+x_2^2}{2\sigma^2}}, \quad (x_1, x_2)\in X= \mathbb{R}^2,
\end{equation}
and the detection function is nonhomogeneous exponential: $d(x, y)=1-e^{-\alpha_xy}$ for all $x\in X$ and $y\geq 0$ where $\alpha_x>0$. For convenience, we will use polar coordinates.  Then $x_0=(r_0, \theta_0)$ and 
\[
q_{(r, \theta)}^{-1}(\lambda)
=\left\{
		                           \begin{array}{ll}
		                          -\frac{1}{\alpha_x}\left[\ln \left(\frac{2\pi \sigma^2\lambda}{\alpha_x}\right)+\frac{r^2}{2\sigma^2}\right]  & \hbox{if $r^2\leq 2\sigma^2\ln\left(\frac{\alpha_x}{2\pi \sigma^2\lambda}\right)$,} \\
					0,  & \hbox{if $r^2> 2\sigma^2\ln\left(\frac{\alpha_x}{2\pi \sigma^2\lambda}\right)$.} 
		                          \end{array}		                     
		                         \right.
\]
Therefore, 
\begin{eqnarray*}
Q(\lambda) &=& \int_0^{2\pi}\int_0^\infty q_{(r, \theta)}^{-1} (\lambda)r dr d\theta \\
		 &=&-\frac{ 2\pi}{\alpha_x} \int_0^{\sigma\left[2\ln \left(\frac{\alpha_x}{2\pi \sigma^2 \lambda}\right)\right]^{1/2}} 
		         \left[\ln \left(\frac{2\pi \sigma^2\lambda}{\alpha_x}\right)+\frac{r^2}{2\sigma^2}\right] r dr\\
		&=& \frac{\pi \sigma^2}{\alpha_x}\left[\ln\left(\frac{2\pi \sigma^2\lambda}{\alpha_x}\right)\right]^2,
\end{eqnarray*}
and
\[
Q^{-1}(K)=\left\{
		                           \begin{array}{ll}
		                          \frac{\alpha_x}{2\pi \sigma^2}\exp\left[-\left(\frac{\alpha_x K}{\pi \sigma^2}\right)^{1/2}\right],  & \hbox{if $r^2\leq 2\sigma^2\ln\left(\frac{\alpha_x}{2\pi \sigma^2 K}\right)$,} \\
					0,  & \hbox{if $r^2> 2\sigma^2\ln\left(\frac{\alpha_x}{2\pi \sigma^2 K}\right)$.} 
		                          \end{array}
		                         \right.
\]
It follows that the uniformly optimal plan for $\Pi$ within $\Phi(E)$ exists and is given by (in polar coordinates)
\begin{eqnarray*}
\varphi^\star((r, \theta), t) &=& \left\{
		                           \begin{array}{ll}
		                           \frac{1}{\alpha_x}\left[\left(\frac{\alpha_x Wvt}{\pi \sigma^2}\right)^{1/2}-\frac{r^2}{2\sigma^2}\right],& \hbox{if $r^2\leq 2\sigma^2\left(\frac{\alpha_xWvt}{\pi \sigma^2}\right)^{1/2}$,} \\
					0, & \hbox{if $r^2> 2\sigma^2\left(\frac{\alpha_xWvt}{\pi \sigma^2}\right)^{1/2}$,} 
		                          \end{array}
		                         \right.\\
		                         &=& 
		                          \left\{
		                           \begin{array}{ll}
		                           \frac{1}{\alpha_x}\left[H_x\sqrt{t}-\frac{r^2}{2\sigma^2}\right],& \hbox{if $0<r\leq R_x(t)$,} \\
					0, & \hbox{if $r>R_x(t)$,} 
		                          \end{array}
		                         \right.
\end{eqnarray*}		                        	                         
where $R^2_x(t)=2\sigma^2H_x\sqrt{t}$ and $H_x=\sqrt{\alpha_xWv/\pi\sigma^2}$.   In this case, the allocation at a point $x=(r_x, \theta_x)$ depends only on $\alpha_x$ and $r_x$ (not on $\theta_x$) and that the area assigned non-zero allocation is a disc whose radius grows with $t$.  The subjective probability of detection is found to be
\begin{equation}
P[\varphi^\star(\cdot, t)]=1-(1+H_x\sqrt{t})e^{-H_x\sqrt{t}}, \quad t\geq 0.
\end{equation}
Also, the true probability of detection is a function is $\sigma$ and equals
\begin{eqnarray*}
P^{\#}[\varphi^\star(\cdot, t)](\sigma) &=& d(x_0, \varphi^\star(x_0,  t)) \\
&=& \left\{
		                           \begin{array}{ll}
		                             1-e^{-\left(H_{x_0} \sqrt{t}-\frac{r_0^2}{2\sigma^2}\right)},  & \hbox{if $0< r_0\leq R_{x_0}(t) $;} \\
					 0,  & \hbox{if $r_0>R_{x_0}(t)$.} 
		                          \end{array}
		                         \right.
\end{eqnarray*}
Without loss of generality, we may assume $r_0\leq R_{x_0}(t)$.  Put
\[h(\sigma)=H_{x_0}\sqrt{t}-\frac{r_0^2}{2\sigma^2}=\left(\frac{\alpha_{x_0}Wv}{\pi}\right)^{1/2}\frac{\sqrt{t}}{\sigma}-\frac{r^2_0}{2\sigma^2}.\]

Since $r_0\leq R_{x_0}(t)$ if and only if $r^2_0\leq2\sigma^2 H_{x_0}\sqrt{t}$, we know $h(\sigma)\geq 0$. In addition,  $h$ is continuous and  L'Hospital's Rule implies
\[
\lim_{\sigma\rightarrow 0^+} h(\sigma)=\lim_{\sigma\rightarrow 0^+} \frac{1}{\sigma^2}\left[\left(\frac{\alpha_{x_0}Wv}{\pi}\right)^{1/2}\sigma\sqrt{t}-\frac{r_0^2}{2}\right]=+\infty.
\]

Therefore, there is a $\widetilde{\sigma}>0$ such that $0<\widetilde{\sigma}<\sigma$ and $h(\widetilde{\sigma})>h(\sigma)$.  Hence, $P^{\#}[\varphi^\star(\cdot, t)](\widetilde{\sigma})>P^{\#}[\varphi^\star(\cdot, t)](\sigma)$ for all $t>0$,  and $\mu^{\#}(\varphi)(\widetilde{\sigma})<\mu^{\#}(\varphi)(\sigma)$.

\end{example}

\section{Main results}

Suppose the uniformly optimal search plan for $\Pi$ within $\Phi(E)$ exists.  We will show that there is another target distribution $\widetilde{\Pi}$ such that $P^{\#}[\widetilde{\varphi}^\star]>P^{\#}[\varphi^\star]$ where $\widetilde{\varphi}^\star$ is the uniformly optimal search plan for $\widetilde{\Pi}$ and $E(t)$. We remind the reader that the target distribution is always non-degenerate due to the uncertainty of the true target location in the search problem.

\subsection{Misspecified models}

If $x_0\in X$, we say our model is \emph{well-specified}; otherwise, we say our model is \emph{misspecified}.  In practice,  we use all the available information to specify the possibility area $X$, but there is no guarantee that our model is well-specified.  For example, the initial model used in the search of Malaysia Airlines Flight 370 was misspecified due to misformation.  If the target is found during a search, then we know our model is well-specified. But that is hindsight.  In many cases,  there is  no way to be absolutely sure that our model is well-specified before the search begins. Therefore,  the case of a misspecified model deserves consideration.

If our model is misspecified, then the possibility area $X$, being the support of the target distribution $\Pi$, will not contain the target.  Since a search plan only allocates effort on the possibility area $X$,  the true probability of detection will be zero.  Then it is trivial to see that if  $\widetilde{\Pi}$ is a new target distribution whose support $\widetilde{X}$ contain $x_0$ and $\varphi$ is any search plan (based on $\widetilde{\Pi}$) that puts nonzero effort on $x_0$, then we will have a positive true probability of detection using $\widetilde{\Pi}$.  Next, we turn to the nontrivial case, i.e., the well-specified case.

\subsection{Well-specified models}

The above two motivating examples provide some insight into the well-specified case.  If a new target distribution increases the rate of returns at $x_0$ in a way that the total allocation does not decrease,  then the value of $\varphi^\star$ at $x_0$ will increase. This will increase the true probability of detection and shorten the true mean time to find the target. 

\subsubsection{The discrete case}

The detection function in Example~1 is homogeneous over $X$. The next example shows that the same phenomenon can occur even if the detection function is non-homogeneous.

\begin{example}[Generalization of Example~\ref{ex:discrete1}]
\label{ex:counterdiscrete}
Suppose $X=\{1, 2\}$,  {\color{blue}$x_0=1$,} the target density is $\pi(1)=p$ and $\pi(2)=1-p$ where $1/2<p<1$,  and the detection function is $d(x, y)=1-e^{-\alpha_x, y}$ for $x\in X$ and $y\geq 0$ where $\alpha_1>0$, $\alpha_2>0$, and $\alpha_1\neq\alpha_2$.  Then $\frac{\partial }{\partial y}d(x, y)=\alpha_x e^{-\alpha_x y}$ for $y\geq 0$. Thus, 
\begin{eqnarray*}
q_1^{-1}(\lambda)&=&\left\{
		                           \begin{array}{ll}
		                           \frac{1}{\alpha_1}\ln \left(\frac{p\alpha_1}{\lambda}\right),& \hbox{if $0<\lambda\leq p\alpha_1$,} \\
					0, & \hbox{otherwise,}
		                          \end{array}
		                         \right.		\\
q_2^{-1}(\lambda)&=&\left\{
		                           \begin{array}{ll}
		                           \frac{1}{\alpha_2}\ln \left[\frac{(1-p)\alpha_2}{\lambda}\right],& \hbox{if $0<\lambda\leq (1-p)\alpha_2$,} \\
					0, & \hbox{otherwise.}
		                          \end{array}
		                         \right.		
\end{eqnarray*}
We need to consider two cases: (i)~$(1-p)\alpha_2<  p\alpha_1$, and (ii)~$(1-p)\alpha_2\geq  p\alpha_1$.

In Case~(i), we have 

\begin{eqnarray*}
Q(\lambda) &=& q_1^{-1}(\lambda) +q_2^{-1}(\lambda)  \\
		 &=& \left\{
		                           \begin{array}{ll}
		                           \ln\left[\frac{(p\alpha_1)^{1/\alpha_1}((1-p)\alpha_2)^{1/\alpha_2}}{\lambda^{1/\alpha_1+1/\alpha_2}}\right],& \hbox{if $0<\lambda\leq (1-p)\alpha_2$,} \\
					\frac{1}{\alpha_1}\ln \left(\frac{p\alpha_1}{\lambda}\right), & \hbox{$(1-p)\alpha_2<\lambda<p\alpha_1$,} \\
					0, & \hbox{otherwise,}
		                          \end{array}
		                         \right.
\end{eqnarray*}
Hence,

\[		                         
Q^{-1}(K)=\left\{
		                           \begin{array}{ll}
p\alpha_1 e^{-K \alpha_1}, & \hbox{if $0<K \leq\ln \left[\frac{p\alpha_1}{(1-p)\alpha_2}\right]^{1/\alpha_1}$,} \\
  \left[e^{-K} (p\alpha_1)^{1/\alpha_1}((1-p)\alpha_2)^{1/\alpha_2}\right]^{\frac{\alpha_1\alpha_2}{\alpha_1+\alpha_2}},  & \hbox{if $K>\ln \left[\frac{p\alpha_1}{(1-p)\alpha_2}\right]^{1/\alpha_1}$.}
		                          \end{array}
		                         \right.	                         
\]

It follows that the uniformly optimal plan for $\Pi$ within $\Phi(E)$ is given by 
\begin{eqnarray*}
\varphi^\star(1, t)&=&q_1^{-1}(Q^{-1}(E(t))) \\
&=&  \left\{
		                           \begin{array}{ll}
		                           E(t),& \hbox{if $0<E(t)\leq \ln \left[\frac{p\alpha_1}{(1-p)\alpha_2}\right]^{1/\alpha_1}$,} \\
					\frac{1}{\alpha_1+\alpha_2}\left(\alpha_2E(t)+\ln \left[\frac{p\alpha_1}{(1-p)\alpha_2}\right]\right) , & \hbox{if $E(t)>\ln \left[\frac{p\alpha_1}{(1-p)\alpha_2}\right]^{1/\alpha_1}$,}
		                          \end{array}
		                         \right. \\
\varphi^\star(2, t)&=&q_2^{-1}(Q^{-1}(E(t))) \\
&=&  \left\{
		                           \begin{array}{ll}
		                           0,& \hbox{if $0<E(t)\leq \ln \left[\frac{p\alpha_1}{(1-p)\alpha_2}\right]^{1/\alpha_1}$,} \\
					\frac{1}{\alpha_1+\alpha_2}\left(\alpha_1E(t)-\ln \left[\frac{p\alpha_1}{(1-p)\alpha_2}\right]\right) , & \hbox{if $E(t)>\ln \left[\frac{p\alpha_1}{(1-p)\alpha_2}\right]^{1/\alpha_1}$.}
		                          \end{array}
		                         \right. \\		                         
\end{eqnarray*}
First, consider the case $E(t)> \ln \left[\frac{p\alpha_1}{(1-p)\alpha_2}\right]^{1/\alpha_1}$ for all $t>0$. Regardless of whether $x_0=1$ or $x_0=2$,  the same argument as in Example~1 shows that there is a $\widetilde{p}\neq p$ such that $P^{\#}[\widetilde{\varphi}^\star(\cdot, t)]>P^{\#}[\varphi^\star(\cdot, t)]$ and $\mu^{\#}(\widetilde{\varphi}^\star)<\mu^{\#}(\varphi^\star)$, where $\widetilde{\varphi}^\star$ is the uniformly optimal search plan for $\widetilde{\pi}$, and $\widetilde{\pi}(1)=\widetilde{p}$ and $\widetilde{\pi}(2)=1-\widetilde{p}$.   The analysis for Case~(ii) is completely similar and hence is omitted.  

Now consider the case $0<E(t)\leq \ln \left[\frac{p\alpha_1}{(1-p)\alpha_2}\right]^{1/\alpha_1}$ for all $t>0$. In this case,   the true detection probability is a constant for all $t>0$.  Since $p/(1-p)$ is increasing  in $p$ for all $0<p<1$, $\ln \left[\frac{p\alpha_1}{(1-p)\alpha_2}\right]^{1/\alpha_1}$ is increasing in $p$. Thus,   the true detection probability will remain the same when  $p$ increases.

\end{example}

Example 3 shows assigning more target probability to the true target location does not necessarily increase the true detection probability.  However, if the amount of effort is sufficiently large,  then putting more target probability mass will increase the true detection probability.  Part~(ii) of the next theorem establishes this fact in the general setup.

\begin{theorem}
\label{thm:nooptdiscrete1}
Assume $X$ is discrete and $x_0$ is the true cell in which the target is located.  Suppose the cost function takes the form $c(x,y)=y$ for all $y\geq 0$ and $x \in X$,  the detection function is regular,  $\Pi$ is a target distribution on $X$, and $\varphi^\star$ is the uniformly optimal search plan for $\Pi$ within $\Phi(E)$ such that $0\leq \varphi^\star(x_0, t)<E(t)$.  If either of the following conditions holds:
\begin{enumerate}
\item[(i)]there is an $x_1\in X$ such that $x_1\neq x_0$ and $\pi(x_1)>\pi(x_0)$, 
\item[(ii)]$1>\pi(x_0)\geq \pi(x)$ for all $x\neq  x_0$ and  $\ E(t)>q_{x_1}^{-1}(q_{x_2}(0))$ for all $t>0$,
\end{enumerate}
then there exists another target distribution $\widetilde{\Pi}\neq \Pi$ such that $P^{\#}[\widetilde{\varphi}^\star(\cdot, t)]>P^{\#}[\varphi^\star(\cdot, t)]$ for all $t>0$ and $\mu^{\#}(\widetilde{\varphi}^\star)<\mu^{\#}(\varphi^\star)$, where $\widetilde{\varphi}^\star$ is the uniformly optimal search plan for $\widetilde{\Pi}$ within $\Phi(E)$.
\end{theorem}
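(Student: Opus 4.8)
The plan is to reduce the statement to a single monotonicity claim about the effort the uniformly optimal plan places at the true cell $x_0$, and then to build $\widetilde\Pi$ by shifting probability mass onto $x_0$. For the reduction: since the detection function is regular, $d(x_0,\cdot)$ is continuous and strictly increasing, and by \eqref{eq:trueprob} the true detection probability at time $t$ is $d(x_0,\varphi^\star(x_0,t))$. Hence it suffices to produce a non-degenerate $\widetilde\Pi\neq\Pi$, with uniformly optimal plan $\widetilde\varphi^\star$ within $\Phi(E)$, such that $\widetilde\varphi^\star(x_0,t)>\varphi^\star(x_0,t)$ for every $t>0$: strict monotonicity of $d(x_0,\cdot)$ then yields $P^{\#}[\widetilde\varphi^\star(\cdot,t)]>P^{\#}[\varphi^\star(\cdot,t)]$ for all $t>0$, and comparing the integrands in \eqref{eq:truemean} gives $\mu^{\#}(\widetilde\varphi^\star)<\mu^{\#}(\varphi^\star)$. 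By Theorem~\ref{thm:unifopt} we write $\varphi^\star(x,t)=q_x^{-1}(\lambda(t))$, where $\lambda(t):=Q^{-1}(E(t))$ is the unique level with $\sum_{x\in X}q_x^{-1}(\lambda(t))=E(t)$, and use $\widetilde q_x,\widetilde Q,\widetilde\lambda(t)$ for the corresponding objects built from $\widetilde\Pi$.

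The construction rests on two monotonicity facts. \emph{(a)} If $\widetilde\pi(x_0)>\pi(x_0)$ then $\widetilde q_{x_0}(y)=\widetilde\pi(x_0)\,\frac{\partial}{\partial y}d(x_0,y)>\pi(x_0)\,\frac{\partial}{\partial y}d(x_0,y)=q_{x_0}(y)$ for all $y\geq0$, and since both sides are continuous and strictly decreasing in $y$, inversion gives $\widetilde q_{x_0}^{-1}(\lambda)\geq q_{x_0}^{-1}(\lambda)$ for all $\lambda>0$, strictly whenever $\widetilde q_{x_0}^{-1}(\lambda)>0$; symmetrically $\widetilde\pi(x)\leq\pi(x)$ forces $\widetilde q_x^{-1}\leq q_x^{-1}$. \emph{(b)} If in addition $\widetilde Q(\lambda)\leq Q(\lambda)$ for every $\lambda$ in the range $\{\lambda(t):t>0\}$, then, $Q$ and $\widetilde Q$ being non-increasing, $\widetilde\lambda(t)\leq\lambda(t)$, so $q_{x_0}^{-1}(\widetilde\lambda(t))\geq q_{x_0}^{-1}(\lambda(t))$. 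Combining (a) and (b), $\widetilde\varphi^\star(x_0,t)=\widetilde q_{x_0}^{-1}(\widetilde\lambda(t))\geq q_{x_0}^{-1}(\widetilde\lambda(t))\geq q_{x_0}^{-1}(\lambda(t))=\varphi^\star(x_0,t)$, with strict inequality whenever $\widetilde\varphi^\star(x_0,t)>0$. So it is enough to construct $\widetilde\Pi$ with $\widetilde\pi(x_0)>\pi(x_0)$, $\widetilde\pi(x)\leq\pi(x)$ for $x\neq x_0$, $\widetilde Q\leq Q$ on the relevant range of levels, and $\widetilde\varphi^\star(x_0,t)>0$ for all $t>0$.

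For the construction itself we transfer probability from the other cells to $x_0$. Under hypothesis~(i) the surplus $\pi(x_1)-\pi(x_0)>0$ supplies the needed room: shift mass from $x_1$ (and, if necessary, from the remaining cells) to $x_0$, taking $\widetilde\pi(x_0)$ large enough that $x_0$ acquires the largest initial rate of return, $\widetilde q_{x_0}(0)\geq\widetilde q_x(0)$ for all $x$ — which makes $x_0$ receive positive allocation at \emph{every} $t>0$ — while keeping the transfer small enough (for instance, stopping once $\widetilde\pi(x_1)\geq\widetilde\pi(x_0)$) that $\widetilde Q\leq Q$ on the relevant range; the conclusion then follows from the facts above, the only delicate times being those where $\varphi^\star(x_0,t)=0$, for which the construction has been arranged precisely so that $\widetilde\varphi^\star(x_0,t)>0$. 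Under hypothesis~(ii), where $x_0$ already carries the most mass, one instead takes $\widetilde\pi(x_0)=\pi(x_0)+\delta$ for a small $\delta>0$ and removes the mass $\delta$ from the other cells proportionally: the budget condition in~(ii) keeps us in the regime in which $x_0$ is allocated for all $t$, and for $\delta$ small one checks $\widetilde Q\leq Q$ on the relevant range and hence $\widetilde\varphi^\star(x_0,t)>\varphi^\star(x_0,t)$. Equivalently, one differentiates $\widetilde\varphi^\star(x_0,t)$ in $\delta$ at $\delta=0$: applying the implicit function theorem to $\sum_x\widetilde q_x^{-1}(\widetilde\lambda(t))=E(t)$ expresses the derivative as a convex combination of the nonnegative numbers $\partial q_x^{-1}(\lambda(t))/\partial\pi(x)$, which is strictly positive because the hypothesis $\varphi^\star(x_0,t)<E(t)$ forces some cell $x\neq x_0$ to carry positive allocation.

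The main obstacle is uniformity in $t$: a single $\widetilde\Pi$ must beat $\varphi^\star$ at \emph{all} $t>0$ simultaneously. This is why $\widetilde Q\leq Q$ must be controlled over the whole range $\{\lambda(t):t>0\}$ rather than at one value, and — because for small budgets $x_0$ may receive no effort under $\Pi$, so that the first-order effect of a small mass shift vanishes there — it is why $\widetilde\Pi$ must place enough mass on $x_0$ to make it allocated at every $t>0$. Checking that hypotheses~(i) and~(ii), together with the standing assumption $\varphi^\star(x_0,t)<E(t)$ and the budget condition in~(ii), each permit such a uniform choice of $\widetilde\Pi$, all while tracking the two competing effects — the rate of return at $x_0$ rises, but the cutoff level $\widetilde\lambda(t)$ may rise too — is where the real work lies; it is also exactly the subtlety displayed in Example~\ref{ex:counterdiscrete}, where insufficient effort leaves the true detection probability unchanged.
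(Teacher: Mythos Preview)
Your reduction to the allocation inequality $\widetilde\varphi^\star(x_0,t)>\varphi^\star(x_0,t)$ and your monotonicity facts (a) and (b) are exactly the scaffolding the paper uses. The divergence is in how $\widetilde\Pi$ is actually built, and here you miss the simplifying idea in Case~(i).

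For Case~(i) the paper does not try to balance a general mass transfer; it simply \emph{swaps} the two masses, taking $\widetilde\pi(x_0)=\pi(x_1)$, $\widetilde\pi(x_1)=\pi(x_0)$, and $\widetilde\pi=\pi$ elsewhere. The payoff is that the family $\{\widetilde q_x^{-1}\}_{x\in X}$ is (at least when $d(x_0,\cdot)=d(x_1,\cdot)$, which is what the paper's argument tacitly uses) a permutation of $\{q_x^{-1}\}_{x\in X}$, so $\widetilde Q\equiv Q$ and hence $\widetilde\lambda(t)=\lambda(t)$ for every $t$. The ``competing effect'' you flag in your last paragraph---the cutoff level possibly rising---simply does not occur, and the strict inequality drops out of fact~(a) alone. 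Your plan instead imposes two constraints pulling in opposite directions (make $\widetilde q_{x_0}(0)$ largest \emph{and} keep $\widetilde Q\le Q$ on the whole range of levels), and you never argue they are simultaneously satisfiable; indeed the parenthetical ``stopping once $\widetilde\pi(x_1)\ge\widetilde\pi(x_0)$'' is in tension with the first constraint. The swap dissolves the problem rather than balancing it.

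For Case~(ii) the paper goes in the opposite direction from your small-$\delta$ perturbation: it lets $\widetilde\pi(x_0)\to 1^-$ and uses continuity of $\varphi^\star(x_0,t)$ in the target density together with $\lim_{\pi(x_0)\to1^-}\varphi^\star(x_0,t)=E(t)$. Since the standing hypothesis is $\varphi^\star(x_0,t)<E(t)$, some $\widetilde\Pi$ along this path beats the original. This sidesteps your implicit-function-theorem computation entirely. The budget condition in~(ii) is what keeps every $t>0$ in the branch where $x_0$ already receives positive effort, so the limit works branch-by-branch; neither the paper nor your sketch is fully explicit about why a \emph{single} $\widetilde\Pi$ suffices uniformly in $t$, but the paper's large-mass limit is more robust than your first-order argument, which---as you yourself note---degenerates at times where $\varphi^\star(x_0,t)=0$.
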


\begin{proof}
The inverse function of the  rate of return $q_x(y)=\pi(x) \frac{\partial}{\partial y}d(x,y)$ is given by
\begin{equation*}
q^{-1}_x(\lambda)=\left\{
		                           \begin{array}{ll}
		                           \text{the inverse function of $q_x(y)$  evaluated at $\lambda$},& \hbox{if $0<\lambda\leq q_x(0)$,} \\
					0, & \hbox{if $\lambda>q_x(0)$.} 
		                          \end{array}
		                         \right.
\end{equation*}	
Suppose $\widetilde{\pi}$ is the new target density to be constructed.  Since $\frac{\partial}{\partial y}d(x, y)$ is independent of the target density $\pi$,  $\widetilde{\pi}(x_0)>\pi(x_0)$ will imply $\widetilde{q}_{x_0}(y)> q_{x_0}(y)$ for all $y\geq 0$.  On the other hand,  $q_x^{-1}(\lambda)$ equals the inverse of $\ \frac{\partial}{\partial y}d(x, y)$ evaluated at $\frac{\lambda}{\pi(x)}$ when $\lambda\leq q_x(0)$.  Since $\ \frac{\partial}{\partial y}d(x, y)$ is strictly decreasing, we have
\begin{equation}
\label{eq:discrete1}
\widetilde{q}^{-1}_{x_0}(\lambda)\left\{
		                           \begin{array}{ll}
		                           >q_{x_0}^{-1}(\lambda),& \hbox{for $0<\lambda\leq \widetilde{q}_{x_0}(0)$,} \\
					=q_{x_0}^{-1}(\lambda)=0, & \hbox{if $\lambda>\widetilde{q}_{x_0}(0)$.} 
		                          \end{array}
		                         \right.
\end{equation}	
Without loss of generality, we may assume $\widetilde{Q}^{-1}(E(t))\leq \widetilde{q}_{x_0}(0)$.
We are going to construct a new target distribution $\widetilde{\Pi}$ such that
\begin{equation}
\label{eq:condition}
\widetilde{q}_{x_0}^{-1}(\widetilde{Q}^{-1}(E(t)))>q_{x_0}^{-1}(Q^{-1}(E(t))),
\end{equation}
which will imply $P^{\#}[\widetilde{\varphi}^\star(\cdot, t)]>P^{\#}[\varphi^\star(\cdot, t)]$ for all $t>0$ and $\mu^{\#}(\widetilde{\varphi}^\star)<\mu^{\#}(\varphi^\star)$ via (\ref{eq:trueprob}) and (\ref{eq:truemean}). To this end,  consider two cases: (i)~there is an $x_1\in X$ such that $x_1\neq x_0$ and $\pi(x_1)>\pi(x_0)$ and (ii)~$1>\pi(x_0)\geq \pi(x)$ for all $x\neq  x_0$ and $E(t)>q_{x_1}^{-1}(q_{x_2}(0))$ for all $t>0$.

In Case~(i), we construct the new target density $\widetilde{\pi}$ by switching the probability masses of $\pi$ on $x_0$ and $x_1$ and keep everything else intact. That is, we define
\[ \widetilde{\pi}(x)=\left\{
		                           \begin{array}{ll}
		                           \pi(x_1),& \hbox{if $x=x_0$,} \\
					\pi(x_0), & \hbox{if $x=x_1$,} \\
					\pi(x), & \hbox{otherwise.}
		                          \end{array}
		                         \right.		                         
		                         \]
Then $\widetilde{\pi}(x_0)>\widetilde{\pi}(x_1)$.  By (\ref{eq:discrete1}), we have $\widetilde{q}^{-1}_{x_0}(\lambda)>q^{-1}_{x_0}(\lambda)$ for $0<\lambda\leq \widetilde{q}_{x_0}(0)$.  Also,   the definition of $\widetilde{\pi}$ and (\ref{eq:rate}) imply 
\[\widetilde{Q}(\lambda)=\sum_{x\in X}\widetilde{q}_x^{-1}(\lambda)=\sum_{x\in X}q_x^{-1}(\lambda)=Q(\lambda), \quad \text{for all $\lambda>0$},\]
which further implies $\widetilde{Q}^{-1}(K)=Q^{-1}(K)$ for all  $K>0$. Therefore, 
\[\widetilde{q}^{-1}_{x_0}(\widetilde{Q}^{-1}(E(t)))=\widetilde{q}^{-1}_{x_0}(Q^{-1}(E(t)))>q_{x_0}^{-1}(Q^{-1}(E(t))).\]

In Case~(ii), we may assume without loss of generality that $X=\{1, \ldots, m\}$ for some integer $m\geq 2$, $x_0=1$, and $1>\pi(1)>\pi(2)\geq \pi(3)\geq \ldots \geq \pi(m)>0$.  Then the definition of $q_x$ implies $q_1(0)>q_2(0)\geq q_3(0)\ldots\geq q_m(0)$. For $i=1, \ldots, m$, we have
\begin{equation*}
q^{-1}_i(\lambda)=\left\{
		                           \begin{array}{ll}
		                           \left(\frac{\partial}{\partial y}d(i, y)\right)^{-1}\bigg |_{y=\frac{\lambda}{\pi(i)}},& \hbox{if $0<\lambda\leq q_x(0)$,} \\
					0, & \hbox{if $\lambda>q_i(0)$.} 
		                          \end{array}
		                         \right.
\end{equation*}	
Thus, 
\[
Q(\lambda)=\sum_{i=1}^m q_i^{-1}(\lambda)=\left\{
		                           \begin{array}{ll}
		                           \sum_{i=1}^m \left(\frac{\partial}{\partial y}d(i, y)\right)^{-1}\bigg |_{y=\frac{\lambda}{\pi(i)}}, & \hbox{if $0<\lambda\leq q_m(0)$,} \\
					\sum_{i=1}^{m-1} \left(\frac{\partial}{\partial y}d(i, y)\right)^{-1}\bigg |_{y=\frac{\lambda}{\pi(i)}}, & \hbox{if $q_m(0)<\lambda\leq q_{m-1}(0)$,} \\
						\ldots & \hbox{$\ldots$,} \\
 \left(\frac{\partial}{\partial y}d(1, y)\right)^{-1}\bigg |_{y=\frac{\lambda}{\pi(1)}} + \left(\frac{\partial}{\partial y}d(2, y)\right)^{-1}\bigg |_{y=\frac{\lambda}{\pi(2)}}, & \hbox{if $q_3(0)<\lambda\leq q_2(0)$,} \\
 \left(\frac{\partial}{\partial y}d(1, y)\right)^{-1}\bigg |_{y=\frac{\lambda}{\pi(1)}}, & \hbox{if $q_2(0)<\lambda\leq q_1(0)$,} \\
                 0, & \hbox{if $\lambda>q_1(0)$.} 
		                          \end{array}
		                         \right.
\]
It follows that $Q^{-1}$ has the form
\[
Q^{-1}(K)=\left\{
		                           \begin{array}{ll}
		                          q_1(K),& \hbox{if $0<K \leq q_1^{-1}(q_2(0))$,} \\
		                         (q_1^{-1}+q_2^{-1})^{-1}(K),& \hbox{if $q_1^{-1}(q_2(0))<K\leq (q_1^{-1}+q_2^{-1})(q_3(0))$,} \\
		                          \ldots,& \hbox{if $\ldots$},
		                          \end{array}
		                         \right.
\]
and $\varphi^\star$ is of the form
\begin{eqnarray*}
\varphi^\star(1, t) &=& q_1^{-1}(Q^{-1}(E(t)))\\
&=&\left\{
		                           \begin{array}{ll}
		                         E(t),& \hbox{if $0<E(t) \leq q_1^{-1}(q_2(0))$,} \\		    
		                         q_1^{-1}((q_1^{-1}+q_2^{-1})^{-1}(E(t))), & \hbox{if $q_1^{-1}(q_2(0))< E(t) \leq (q_1^{-1}+q_2^{-1})(q_3(0))$,} \\	                 
		                          \ldots,& \hbox{if $\ldots$},
		                          \end{array}
		                         \right.
\end{eqnarray*}
Consider the case $q_1^{-1}(q_2(0))< E(t) \leq (q_1^{-1}+q_2^{-1})(q_3(0))$. By the definition of $q_x^{-1}$, we know 
$\varphi^\star$ is continuous in both $\pi(1)$ and $\pi(2)$ and 
\[
\lim_{\pi(1)\rightarrow 1^-} \varphi^\star(1, t)=\lim_{\pi(1)\rightarrow 1^-}q_1^{-1}((q_1^{-1}+q_2^{-1})^{-1}(E(t)))=q_1^{-1}((q_1^{-1})^{-1}(E(t)))=E(t).
\]
For all other cases where $E(t)>(q_1^{-1}+q_2^{-1})(q_3(0))$,  a similar argument leads to the same conclusion. 
Therefore, there is another target density function $\widetilde{\pi}$ such that (\ref{eq:condition}) holds as far as $E(t)>q_1^{-1}(q_2(0))$ for all $t>0$.

\end{proof}
\noindent \textbf{Remark.} The idea of the above proof is as follows.  In Case~(i), $x_0$ has received less target probability mass than $x_1$, then we can simply switch their target probability masses and keep everything else the same; this will increase the rate of return at $x_0$, but it will not change the total allocation.  As a result, the amount of effort at $x_0$ will increase and hence the true probability of detection will go up too.  In Case~(ii),  $x_0$ has already received the highest prior probability mass.  In this case,  increasing the target probability at $x_0$ will not necessarily increase the true detection probability because when $E(t)\leq q_1^{-1}(q_2(0))$ the uniformly optimal search plan will allocate all effort to $x_0$. However, if $E(t)> q_1^{-1}(q_2(0))$,  assigning more target probability to $x_0$ does lead to an increased true detection probability.  As we will see shortly,  the condition ``$E(t)> q_1^{-1}(q_2(0))$'' is unique to the discrete case.  The reason is that, in the continuous case,  the uniformly optimal search plan will never put all target probability mass on a single point.

\begin{cor}
\label{cor:nooptdiscrete1}
Assume $X$ is discrete and $x_0$ is the true cell in which the target is located.  Suppose the cost function takes the form $c(x,y)=y$ for all $y\geq 0$ and $x \in X$,  the detection function is  regular,  $\Pi$ is a non-degenerate target distribution on $X$ (i.e., $\pi(x)<1$ for all $x\in X$), and $\varphi^\star$ is the uniformly optimal search plan for $\Pi$ within $\Phi(E)$ such that $0\leq \varphi^\star(x_0, t)<E(t)$.  Then there  is a point $x\in X$ such that if $x_0=x$ then there exists another target distribution $\widetilde{\Pi}\neq \Pi$ such that $P^{\#}[\widetilde{\varphi}^\star(\cdot, t)]>P^{\#}[\varphi^\star(\cdot, t)]$ for all $t>0$ and $\mu^{\#}(\widetilde{\varphi}^\star)<\mu^{\#}(\varphi^\star)$, where $\widetilde{\varphi}^\star$ is the uniformly optimal search plan for $\widetilde{\Pi}$ within $\Phi(E)$.
\end{cor}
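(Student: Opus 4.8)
The plan is to derive Corollary~\ref{cor:nooptdiscrete1} directly from Theorem~\ref{thm:nooptdiscrete1} by producing one cell $x\in X$ that satisfies the hypotheses of case~(i) or case~(ii) of that theorem when the target is placed in it. I would take
\[
x:=\arg\max_{x'\in X}\,q_{x'}(0),
\]
any maximizer if there is a tie, and note that $q_{x'}(0)>0$ for every $x'\in X$ because $\pi>0$ on $X=\mathrm{supp}(\Pi)$ and $\partial d(x',y)/\partial y>0$ at $y=0$ by regularity (also $|X|\geq 2$ by non-degeneracy). I would then split on whether $\pi(x)$ is a maximum of $\pi$ over $X$.

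If $\pi(x)<\max_{x'\in X}\pi(x')$, then any cell $x_1$ attaining that maximum has $\pi(x_1)>\pi(x)$, hence $x_1\neq x$, so condition~(i) of Theorem~\ref{thm:nooptdiscrete1} holds with $x_0=x$. Every other hypothesis of the theorem is one of the standing hypotheses of the corollary --- in particular $0\leq\varphi^\star(x,t)<E(t)$ is assumed once the true cell is $x$ --- so Theorem~\ref{thm:nooptdiscrete1} yields a target distribution $\widetilde\Pi\neq\Pi$ with $P^{\#}[\widetilde\varphi^\star(\cdot,t)]>P^{\#}[\varphi^\star(\cdot,t)]$ for all $t>0$ and $\mu^{\#}(\widetilde\varphi^\star)<\mu^{\#}(\varphi^\star)$.

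If instead $\pi(x)\geq\pi(x')$ for all $x'\in X$, then non-degeneracy gives $1>\pi(x)\geq\pi(x')$ for all $x'\neq x$, which is the first half of condition~(ii); what remains is the effort condition $E(t)>q_x^{-1}(q_z(0))$ for all $t>0$, where $z$ is a cell of largest $q_{x'}(0)$ among $x'\neq x$. Here I would fix $t>0$, put $\lambda=Q^{-1}(E(t))$ so that $\varphi^\star(x',t)=q_{x'}^{-1}(\lambda)$ and $\sum_{x'\in X}q_{x'}^{-1}(\lambda)=E(t)>0$, and argue in two moves. First, since the sum is positive some cell is active, which forces $\lambda<\max_{x'}q_{x'}(0)=q_x(0)$ and hence $\varphi^\star(x,t)=q_x^{-1}(\lambda)>0$, so $x$ itself is active. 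Second, the standing assumption $\varphi^\star(x,t)<E(t)$ makes $\sum_{x'\neq x}q_{x'}^{-1}(\lambda)=E(t)-\varphi^\star(x,t)>0$, so a second cell is active, giving $\lambda<q_z(0)\leq q_x(0)$. Strict monotonicity of $q_x^{-1}$ on $(0,q_x(0)]$ then gives $q_x^{-1}(\lambda)>q_x^{-1}(q_z(0))$, and dropping the other non-negative summands of $Q(\lambda)$ gives $E(t)=\sum_{x'}q_{x'}^{-1}(\lambda)\geq q_x^{-1}(\lambda)>q_x^{-1}(q_z(0))$, as needed. So condition~(ii) of Theorem~\ref{thm:nooptdiscrete1} holds with $x_0=x$, and the theorem again supplies the desired $\widetilde\Pi$. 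In both cases this $x$ is the point asserted by the corollary.

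The manipulations with $q_x^{-1}$ and $Q$ are routine; the only real decision --- and where I expect the one subtlety to lie --- is which cell to take as $x$. Case~(i) of Theorem~\ref{thm:nooptdiscrete1} is available only when $\pi$ is non-constant on $X$, so when $\pi$ is, for instance, uniform on $X$ one is forced into case~(ii), and there the argument rests entirely on $x$ being active at every $t>0$, which is what allows the bound $E(t)>q_x^{-1}(q_z(0))$ to be read off from $\varphi^\star(x,t)<E(t)$. Choosing $x$ to maximize the initial rate of return $q_{x'}(0)$ is exactly what keeps $x$ active however small $E(t)$ is, and this is the crux of the argument.
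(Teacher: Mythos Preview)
Your proposal is correct and is essentially the argument the paper intends: the corollary is stated in the paper without proof, immediately after Theorem~\ref{thm:nooptdiscrete1}, so the only ``paper proof'' to compare against is the implicit one-liner ``apply Theorem~\ref{thm:nooptdiscrete1}.'' You supply exactly the details that make this one-liner go through --- the main content being your choice of $x$ as a maximizer of $q_{x'}(0)$ and the two-step activity argument that converts the standing hypothesis $\varphi^\star(x,t)<E(t)$ into the effort bound $E(t)>q_x^{-1}(q_z(0))$ of condition~(ii). That step is the real work, and you have it right; a naive attempt to pick $x$ as any non-maximizer of $\pi$ would cover case~(i) but would fail precisely when $\pi$ is uniform on $X$, which is why your choice (maximize $q_{x'}(0)$, not $\pi$) is the correct one. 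One small remark: your reading of the undefined $x_1,x_2$ in condition~(ii) as ``$x_0$'' and ``the cell of second-largest $q_{x'}(0)$'' is the interpretation under which the proof of Theorem~\ref{thm:nooptdiscrete1} actually works, and it matches the labeling used there; it is worth stating this identification explicitly when you write the proof up.
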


%%%
\subsubsection{The continuous case}

The detection function in Example~2 is homogeneous over $X$. The next example shows that the same conclusion may hold if the detection function is non-homogeneous.

\begin{example}
\label{ex:countercontinuous}
Suppose $a<0<b$, $(a, b)\subseteq X\subseteq \mathbb{R}$, the target distribution is 
\[
\pi (x)=\left\{
		                           \begin{array}{ll}
		                         \frac{1}{b-a},& \hbox{if $a<x<b$,} \\
					0,  & \hbox{otherwise,} 
		                          \end{array}
		                         \right.
\]
and the detection function is 
\[d(x, y)=1-e^{-\alpha_xy},\quad y\geq 0,\]
where 
\[
\alpha_x=\left\{
		                           \begin{array}{ll}
		                           \beta_1,& \hbox{if $x\geq 0$,} \\
					\beta_2,  & \hbox{if $x<0$,} 
		                          \end{array}
		                         \right.
\]
and $\beta_1> \beta_2>0$.  Then 
\[
q_x(y)=\left\{
		                           \begin{array}{ll}
		                          \frac{\beta_1}{b-a}e^{-\beta_1 y},& \hbox{if $0\leq x<b$,} \\
		                          \frac{\beta_2}{b-a}e^{-\beta_2 y},& \hbox{if $a<x<0$,} \\
		                          0,& \hbox{otherwise,} \\
		                          \end{array}
		                         \right.
\]
and
\[
q_x^{-1}(\lambda)=\left\{
		                           \begin{array}{ll}
		                          \frac{1}{\beta_1} \ln\left[\frac{\beta_1}{\lambda(b-a)}\right],& \hbox{if  $0<\lambda\leq \frac{\beta_1}{b-a}$ and $0\leq x<b$,} \\
		                          \frac{1}{\beta_2} \ln\left[\frac{\beta_2}{\lambda(b-a)}\right],& \hbox{if  $0<\lambda\leq \frac{\beta_2}{b-a}$ and $a<x<0$,} \\
		                          0,& \hbox{otherwise.} \\
		                          \end{array}
		                         \right.
\]
It follows that 

\begin{eqnarray*}
Q(\lambda) =\left\{
		                           \begin{array}{ll}
		                         \ln \left(\frac{\beta_1^{b/\beta_1}\beta_2^{-a/\beta_2}}{\left[\lambda (b-a)\right]^{b/\beta_1-a/\beta_2}}\right),& \hbox{if $0<\lambda<\frac{\beta_2}{b-a}$,} \\
		                          \ln\left[\frac{\beta_1}{\lambda(b-a)}\right]^{b/\beta_1},& \hbox{if $\frac{\beta_2}{b-a}<\lambda<\frac{\beta_1}{b-a}$,} \\
		                          0,& \hbox{otherwise,} \\
		                          \end{array}
		                         \right.
\end{eqnarray*}

and

\begin{eqnarray*}
Q^{-1}(K) =\left\{
		                           \begin{array}{ll}
		                          \frac{\beta_1}{b-a}e^{-K\beta_1/b},& \hbox{if $0<K\leq \frac{b}{\beta_1}\ln (\beta_1/\beta_2)$,} \\
		                        \frac{1}{b-a}\left(e^{-K} \beta_1^{b/\beta_1}\beta_2^{-a/\beta_2}\right)^{\frac{1}{b/\beta_1-a/\beta_2}},& \hbox{if $K> \frac{b}{\beta_1}\ln (\beta_1/\beta_2)$.} \\
		                          \end{array}
		                         \right.
\end{eqnarray*}

We have $\frac{b_2}{b-a}\leq Q^{-1}(E(t))<\frac{b_1}{b-a}$ for $0<E(t)\leq \frac{b}{\beta_1}\ln(\beta_1/\beta_2)$ and $Q^{-1}(E(t))\leq \frac{b_2}{b-a}$ for $E(t)> \frac{b}{\beta_1}\ln(\beta_1/\beta_2)$.
Therefore,  the uniformly optimal search plan for $\Pi$ within $\Phi(E)$ is given by 

\begin{eqnarray*}
 & & \varphi^\star (x, t) = q_x^{-1}(Q^{-1}(E(t))) \\
				&=& \left\{
		                           \begin{array}{ll}
		                           \frac{1}{b}E(t) &  \hbox{if  $0<E(t)\leq \frac{b}{\beta_1}\ln(\beta_1/\beta_2)$ and $0\leq x<b$,} \\
		                          \frac{1}{\beta_1} \left[\frac{E(t)}{b/\beta_1-a/\beta_2}-\frac{a/\beta_2}{b/\beta_1-a/\beta_2}\ln \beta_1+
		                          \frac{a/\beta_2}{b/\beta_1-a/\beta_2}\ln \beta_2
		                          \right],& \hbox{if  $E(t)>\frac{b}{\beta_1}\ln(\beta_1/\beta_2)$ and $0\leq x<b$,} \\		                        
		                           \frac{1}{\beta_2} \left[\frac{E(t)}{b/\beta_1-a/\beta_2}-\frac{b/\beta_1}{b/\beta_1-a/\beta_2}\ln \beta_1+
		                          \frac{b/\beta_1}{b/\beta_1-a/\beta_2}\ln \beta_2
		                          \right],& \hbox{if  $E(t)>\frac{b}{\beta_1}\ln(\beta_1/\beta_2)$ and $a<x<0$,} \\
		                          0,& \hbox{otherwise.} \\
		                          \end{array}
		                         \right.
\end{eqnarray*}

Take $\beta_1=3\beta_2$ and $\beta_2>0$.  Assume $x_0=0$ and $E(t)>\frac{b}{\beta_1}\ln 3$. Then we have 

\begin{eqnarray*}
\varphi^\star(x_0, t) &=&    \frac{1}{3\beta_2} \left[\frac{E(t)3\beta_2}{b-3a}-\frac{3a}{b-3a}\ln (3\beta_2)+
		                          \frac{3a}{b-3a}\ln \beta_2
		                          \right]\\
		                          &=& \frac{(E(t)\beta_2-a\ln 3)}{(b-3a)\beta_2}.
\end{eqnarray*}

Consider the following new target distribution
\[
\widetilde{\pi}(x)=\left\{
		                           \begin{array}{ll}
		                         \frac{1}{\widetilde{b}-a},& \hbox{if $a<x<\widetilde{b}$,} \\
					0,  & \hbox{otherwise.} 
		                          \end{array}
		                         \right.
\]
Let $\widetilde{\varphi}^\star$ be the uniformly optimal search plan for $\widetilde{\Pi}$ within $\Phi(E)$.  We can always choose $\widetilde{b}>0$ sufficiently small such that $\widetilde{\varphi}^\star(x_0, t) >\varphi^\star(x_0, t)$; hence $P^{\#}[\widetilde{\varphi}^\star(\cdot, t)]>P^{\#}[\varphi^\star(\cdot, t)]$ for all $t>0$ and $\mu^{\#}(\widetilde{\varphi}^\star)<\mu^{\#}(\varphi^\star)$.
\end{example}

Example~4 suggests a continuous analogue of Theorem~\ref{thm:nooptdiscrete1}.  The next theorem shows that this is indeed the case.

\begin{theorem}
\label{thm:nooptcontinuous1}
Assume $X$ is continuous and $x_0$ is the true target location.  Suppose the cost function takes the form $c(x,y)=y$ for all $y\geq 0$ and $x \in X$,  the detection function is  regular,  $\Pi$ is a target distribution, and $\varphi^\star$ is the uniformly optimal search plan for $\Pi$ within $\Phi(E)$.  Then there exists a target distribution $\widetilde{\Pi}\neq \Pi$ such that $P^{\#}[\widetilde{\varphi}^\star(\cdot, t)]>P^{\#}[\varphi^\star(\cdot, t)]$ for all $t>0$ and $\mu^{\#}(\widetilde{\varphi}^\star)<\mu^{\#}(\varphi^\star)$, where $\widetilde{\varphi}^\star$ is the uniformly optimal search plan for $\widetilde{\Pi}$ within $\Phi(E)$.
\end{theorem}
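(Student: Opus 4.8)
The plan is to mimic the mechanism of Theorem~\ref{thm:nooptdiscrete1}, but to exploit a crucial structural difference in the continuous case: by Theorem~\ref{thm:unifopt}, the uniformly optimal search plan is $\varphi^\star(x,t)=q_x^{-1}(Q^{-1}(E(t)))$, where $q_x(y)=\pi(x)\,\partial d(x,y)/\partial y$, and since $\pi$ is a density rather than a mass function, no single point can carry positive probability; consequently $Q^{-1}(E(t))>0$ strictly for every $t>0$ and there is no analogue of the degenerate case ``$E(t)\le q_{x_1}^{-1}(q_{x_2}(0))$'' that forced condition~(ii) in the discrete theorem. So the strategy is: construct a new density $\widetilde\pi$ that (a) strictly increases the rate of return at $x_0$, i.e.\ $\widetilde\pi(x_0)>\pi(x_0)$, while (b) keeping the total-allocation function unchanged, $\widetilde Q(\lambda)=Q(\lambda)$ for all $\lambda>0$. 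Exactly as in equation~(\ref{eq:discrete1}) (whose derivation uses only that $\partial d(x,y)/\partial y$ is strictly decreasing, which holds here by regularity), (a) gives $\widetilde q_{x_0}^{-1}(\lambda)>q_{x_0}^{-1}(\lambda)$ on the relevant range of $\lambda$, and (b) gives $\widetilde Q^{-1}=Q^{-1}$; combining these yields
\begin{equation*}
\widetilde\varphi^\star(x_0,t)=\widetilde q_{x_0}^{-1}(\widetilde Q^{-1}(E(t)))=\widetilde q_{x_0}^{-1}(Q^{-1}(E(t)))>q_{x_0}^{-1}(Q^{-1}(E(t)))=\varphi^\star(x_0,t)
\end{equation*}
for all $t>0$, provided $Q^{-1}(E(t))\le\widetilde q_{x_0}(0)$, which we may arrange. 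Since $d(x_0,\cdot)$ is strictly increasing, this gives $P^{\#}[\widetilde\varphi^\star(\cdot,t)]=d(x_0,\widetilde\varphi^\star(x_0,t))>d(x_0,\varphi^\star(x_0,t))=P^{\#}[\varphi^\star(\cdot,t)]$ for all $t>0$, and then $\mu^{\#}(\widetilde\varphi^\star)<\mu^{\#}(\varphi^\star)$ follows from (\ref{eq:truemean}).

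The construction itself is the continuous analogue of ``swapping mass'': fix a small ball (or interval) $B$ around $x_0$ on which $\pi$ is, say, below its essential supremum, pick a disjoint region $B'$ of equal Lebesgue measure on which $\pi$ is somewhat larger, and let $\widetilde\pi$ be obtained from $\pi$ by exchanging its values on $B$ and $B'$ (a measure-preserving rearrangement). This leaves $\widetilde\pi$ a bona fide probability density, raises $\pi$ near $x_0$, and — because $q_x^{-1}$ depends on $x$ only through $\pi(x)$ and the fixed kernel $\partial d(x,\cdot)/\partial y$ — leaves $\int_X q_x^{-1}(\lambda)\,dx$ unchanged, since the integrand is merely permuted between $B$ and $B'$ (one must note the contribution of $x\in B$ and $x\in B'$ to $Q$ is symmetric under the swap because $d(x,\cdot)$ enters only through its $y$-derivative, and regularity makes this integrable). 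If $\pi$ is continuous at $x_0$ with $\pi(x_0)$ not already the essential supremum, such $B,B'$ plainly exist; the general non-degenerate case is handled by first passing to a point $x_0'$ near $x_0$ where a swap is possible, or by noting that a density that is essentially constant everywhere cannot be non-degenerate on an unbounded-measure support, so some level-set structure is always available.

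The main obstacle is the bookkeeping around non-homogeneous detection functions: when $\alpha_x$ (or more generally $\partial d(x,y)/\partial y$) varies with $x$, the swapped regions $B$ and $B'$ may sit in zones with different detection rates, so exchanging the values of $\pi$ does \emph{not} automatically preserve $\int q_x^{-1}(\lambda)\,dx$. The clean fix is to choose $B$ and $B'$ within a region where the detection function is (locally) homogeneous — possible whenever $d$ is continuous in $x$, by shrinking $B$ — or, failing that, to calibrate the exchanged values (not swapping equal values but equal \emph{contributions}) so that the two local contributions to $Q(\lambda)$ cancel for every $\lambda$; this is where one needs to check that regularity of $d$ makes $q_x^{-1}(\lambda)$ jointly measurable and locally integrable, so the calibration is well-posed. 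A secondary, minor point is verifying the ``without loss of generality'' reduction $Q^{-1}(E(t))\le\widetilde q_{x_0}(0)$ — i.e.\ that $x_0$ actually receives positive effort under $\widetilde\varphi^\star$; this is automatic for $t$ small, and for larger $t$ one enlarges the increase $\widetilde\pi(x_0)$ so that $\widetilde q_{x_0}(0)=\widetilde\pi(x_0)\,\partial d(x_0,0)/\partial y$ exceeds $Q^{-1}(E(t))$, exactly the move used at the end of the discrete proof. Everything else — monotonicity in the amount of effort, the passage from detection probability to mean time via (\ref{eq:truemean}) — is routine and identical to Examples~\ref{ex:continuous1} and~\ref{ex:countercontinuous}.
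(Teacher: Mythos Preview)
Your overall architecture---increase the rate of return at $x_0$ while holding $Q$ fixed, then invoke monotonicity of $d(x_0,\cdot)$ and (\ref{eq:truemean})---matches the paper's, but your \emph{construction} of $\widetilde\pi$ diverges from the paper's and creates exactly the obstacle you flag. The paper does not swap mass between two positive-measure sets $B$ and $B'$. Instead it exploits the structural feature peculiar to the continuous case that you yourself noted at the outset: since $Q(\lambda)=\int_X q_x^{-1}(\lambda)\,dx$ is a Lebesgue integral, altering $\pi$ on the single point $\{x_0\}$ (a null set) leaves $Q$, hence $Q^{-1}$, \emph{identically} unchanged, while the pointwise formula
\[
\varphi^\star(x_0,t)=\left(\tfrac{\partial}{\partial y}d(x_0,y)\right)^{-1}\Big|_{y=Q^{-1}(E(t))/\pi(x_0)}
\]
depends on $\pi(x_0)$ directly. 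Since $(\partial d(x_0,\cdot)/\partial y)^{-1}$ is strictly decreasing, any $\widetilde\pi$ with $\widetilde\pi(x_0)>\pi(x_0)$ gives $\widetilde\varphi^\star(x_0,t)>\varphi^\star(x_0,t)$ for all $t>0$. This argument is indifferent to whether $d$ is homogeneous in $x$, so the entire ``main obstacle'' you describe---and the patchwork of local homogeneity or calibrated swaps you propose to repair it---simply never arises.

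Your swap construction does have one advantage the paper forgoes: it produces a $\widetilde\Pi$ that differs from $\Pi$ on a set of positive measure, so $\widetilde\Pi\neq\Pi$ holds as \emph{distributions}, whereas changing $\pi$ only at $x_0$ yields a different density representative of the same measure. The paper is tacitly identifying the target distribution with its chosen density and does not address this point. If you want to preserve that distinction, a cleaner route than your swap is to take $\widetilde\pi=\pi$ off a small neighborhood $B$ of $x_0$ and raise $\pi$ uniformly on $B$ by a factor $c>1$, renormalizing; then $\widetilde Q$ need not equal $Q$, but since both $q_x^{-1}$ and $Q^{-1}$ vary continuously in $\pi$, for $|B|$ small and $c$ bounded the perturbation of $Q^{-1}$ is arbitrarily small while the gain in $q_{x_0}^{-1}$ is bounded below---so the strict inequality still holds, and no homogeneity of $d$ is needed. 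Either way, your proposed fixes (exact local homogeneity from mere continuity of $d$ in $x$, or ``calibrating contributions for every $\lambda$'') are not rigorous as stated and are unnecessary once you use the null-set trick or a small-perturbation argument.
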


\begin{proof}
We have $q_x(y)=\pi(x)\frac{\partial }{\partial y}d(x, y)$.  Hence, 

\[
q^{-1}_x(\lambda)=\left\{
		                           \begin{array}{ll}
		                           \left(\frac{\partial }{\partial y}d(x, y)\right)^{-1}\bigg |_{y=\frac{\lambda}{\pi(x)}},& \hbox{if $0<\lambda\leq q_x(0)$,} \\
					0, & \hbox{if $\lambda>q_x(0)$,} 
		                          \end{array}
		                         \right.
\]
Therefore, 
\[
\varphi^\star(x, t)=q_x^{-1}(Q^{-1}(E(t)))=\left\{
		                           \begin{array}{ll}
		                           \left(\frac{\partial }{\partial y}d(x, y)\right)^{-1}\bigg |_{y=\frac{Q^{-1}(E(t))}{\pi(x)}},& \hbox{if $0<Q^{-1}(E(t)) \leq q_x(0)$,} \\
					0, & \hbox{if $Q^{-1}(E(t))>q_x(0)$,} 
		                          \end{array}
		                         \right.
\]
Since both $q_x^{-1}$ and $Q^{-1}$ are continuous,  the last display shows that $\varphi^\star$ is continuous in $\pi(x)$ for each $x\in X$ and all $t>0$. The regularity of $d$ implies that $\frac{\partial }{\partial y}d(x, y)$ is strictly decreasing on $[0, \infty)$ for each $x\in X$.  Thus,  $\left(\frac{\partial }{\partial y}d(x, y)\right)^{-1}$ is strictly decreasing on $(0, q_x(0)/\pi(x)]$ for each $x\in X$.  It follows that if $1>\widetilde{\pi}(x_0)>\pi(x_0)$, then $\widetilde{\varphi}^\star(x_0, t)>\varphi^\star(x_0, t)$ for all $t>0$.  Therefore,  $P^{\#}[\widetilde{\varphi}^\star(\cdot, t)]>P^{\#}[\varphi^\star(\cdot, t)]$ for all $t>0$ and $\mu^{\#}(\widetilde{\varphi}^\star)<\mu^{\#}(\varphi^\star)$.
\end{proof}

\section{Concluding remarks}

The optimal search theory has made remarkable progress since WWII.  Some people believe all interesting problems in the optimal search theory have been solved or reduced to simple numerical problems.  This article dispels such a belief.  As we have seen,  the current ``best practice'' in the search community does not necessarily generate a search plan that maximizes the true detection probability at each moment.  The problem of finding a search plan that maximizes the true detection probability at each moment remains one of the most important unsolved problems.  If this problem is solvable,   we need to find a solution.  If it is unsolvable,  we need to establish this fact.  In that case,  we also need to ask whether the uniformly optimal search plan is the best we can do or maybe there is still room for improvement. Either way,  this problem warrants our future efforts.

The key results derived in this paper might seem theoretical to some people. However, they have enormous societal and practical relevance on several accounts.  First,  a better solution to the search problem can increase the chance of survival for the personnel involved in a civil or military accident.  Second,  even if no survivor is expected in such an incident (e.g., the MH370 flight),  finding the target is crucial for us to give closure to the families of the victims.   Third,  finding the target enables us to learn from past mistakes.  Therefore,  this article communicates an important message to researchers and practitioners in the search and rescue community: we may still be able to improve the current ``best practice''.

In this article, we have focused only on the case of a stationary target.  For a moving target,  the target distribution will be the law of a stochastic process (i.e., a family of distributions), but it is subjective too.  Therefore,  the key results in this article are expected to generalize to the case of a moving target. The same can be said for the multiple target tracking problem (e.g.,  Stone et al. 2014b).

\section*{Acknowledgments}
I thank all those who have provided me with comments on the previous versions.  This project was partially supported by the Faculty Start-Up Funding Program at the University of Texas at Dallas.

\section*{Conflict of interest}
The author has no conflict of interest to declare.

%\section*{REFERENCES}

\section*{References}
\begin{description}

\item{} Alpern, S.~and Gal, S.~(2003). \emph{The Theory of Search games and Rendezvous}.  Kluwer: Boston. 

\item{} Alpern, S., Chleboun, P., Katslkas, S.~and Lin, K.Y.~(2021).  Adversarial patrolling in a uniform. \emph{Operations Research}~70(1), 129-140.

\item{} Arkin, V.I.~(1964). Uniformly optimal strategies in search problems. \emph{Theory of Probability and its Applications}~9(4),  647--677. 

\item{} Bourque, FA.~(2019).  Solving the moving target search problem using indistinguishable searchers. \emph{European Journal of Operational Research}~275, 45--52.

\item{} Clarkson, J., Glazebrook, K.D.~and Lin, K.Y.~(2020). Fast or slow: search in discrete locations with two search models. \emph{Operations Research}~68(2), 552--571.

\item{} Everett, H.~(1963). Generalized Lagrange multiplier method for solving probglems of optimum allocation of resources. \emph{Operations Research}~11, 399--417.

\item{} Hong, L.~(2024).  On several properties of uniformly optimal search plans. \emph{Military Operations Research}~29(2), 95---106. 

\item{} Kadane, J.B.~(2015). Optimal discrete search with technological choice.  \emph{Mathematical Methods of Operations Research}~81, 317--336. 

\item{} Kallenberg, O.~(2002). \emph{Foundations of Modern Probability}, Second Edition.  Springer: New York.

\item{} Koopman, B.O.~(1946). Search an screening. \emph{Operations Evaluation Group Report No. 56 (unclassified).} Center for Naval Analysis, Rosslyn, Virginia. 

\item{} Koopman, B.O.~(1956a). The theory of search, I. Kinematic bases. \emph{Operations Research}~4, 324--346.

\item{} Koopman, B.O.~(1956b). The theory of search, II. Target detection. \emph{Operations Research}~4, 503--531.

\item{} Koopman, B.O.~(1956a). The theory of search, III The optimum distribution of searching efforts. \emph{Operations Research}~4, 613--626. 

\item{} Kratzke, T.M., Stone, L.D., and Frost J.R.~(2010). Search and rescue optimal planning system.   \emph{Proceedings of the 13th International Conference on Information Fusion, Edinburgh, UK, July 2010}, 26--29.

\item{} Lidbetter, T.~(2020). Search and rescue in the face of uncertain threats. \emph{European Journal of Operations Research}~285, 1153--1160.

\item{} Lin, K.Y.~(2021). Optimal patrol of a perimeter. \emph{Operations Research}~70(5), 2860--2866.

\item{} Richardson, H.R.~and Stone, L.D.~(1971). Operations analysis during the underwater search for Scorpion. \emph{Naval Research Logistic Quarterly}~18, 141--157. 

\item{} Richardson, H.R., Wagner D.H.~and Discenza, J.H.~(1980). The United States Coast Guard Computer-assisted Search Planning System (CASP). \emph{Naval Research Logistic Quarterly}~27,  659--680. 

\item{} Soza Co. Ltd and U.S. Coast Guard~(1996).  \emph{The Theory of Search: A Simplified Explanation}.  U.S. Coast Guard: Washington, D.C..

\item{} Stone, L.D.~(1973). Totally optimality of incrementally optimal allocations. \emph{Naval Research Logistics Quarterly}~20, 419--430.

\item{} Stone, L.D.~(1975). \emph{Theory of Optimal Search}.  Academic Press: New York.

\item{} Stone, L.D.~(1976). Incremental and total optimization of separable functionals with constraints. \emph{SIAm Journal on Control and Optimization}~14,  791--802.

\item{} Stone, L.D., Royset, J.O., and Washburn, A.R.~(2016). \emph{Optimal Search for Moving Targets}. Springer: New York.

\item{} Stone, L.D.~and Stanshine J.A.~(1971). Optimal searching using uninterrupted contact investigation.  \emph{SIAM Journal of Applied Mathematics}~20, 241--263.

\item{} Stone, L.D.~(1992). Search for the SS Central America: mathematical treasure hunting. \emph{Interfaces}~22: 32--54. 

\item{} Stone, L.D., Keller, C.M. , Kratzke, t.M.~and Strumpfer, J.P.~(2014a). Search for the wreckage of Air France AF 447. \emph{Statistical Science}~29:69--80. 

\item{} Stone, L.D.,  Streit, R. L.,  Corwin, T.L.~and Bell, K.L.~(2014b). \emph{Bayesian Multiple Target Tracking}, Second Edition. 
Artech House: Boston,

%\item{} Wagner, D. H., Mylander, W.C. , and Sanders, t.J.~(1999). \emph{Naval Operations Analysis}.  Naval Institute Press: Annapolis, M.A.. 

\item{} Vermeulen, J.F.~and Brink, M.V.~(2017).   The search for an altered moving target.  \emph{Journal of the Operational Research Society}~56(5), 514--525.

\item{} Washburn, A.~(2014). \emph{Search and Detection}, 5th Edition. Create Space: North Carolina.

%\item{} Washburn, A.~and Kress, M.~(2009). \emph{Combat Modeling}.  Springer: New York. 
\end{description}

\end{document}